\newtheorem{proposition}{Proposition}
\newtheorem{theorem}[proposition]{Theorem}
\newtheorem{lemma}[proposition]{Lemma}
\newtheorem{corollary}[proposition]{Corollary}
\theoremstyle{remark}
\theoremstyle{definition}
\newtheorem{definition}[proposition]{Definition}
\numberwithin{equation}{section}
\numberwithin{proposition}{section}
\numberwithin{figure}{section}
\numberwithin{table}{section}
\newcommand{\N}{\mathbb{N}}
\newcommand{\R}{\mathbb{R}}
\newcommand{\E}{\mathbb{E}}
\newcommand{\C}{\mathcal{C}}
\renewcommand{\leq}{\leqslant}
\renewcommand{\geq}{\geqslant}
\renewcommand{\subset}{\subseteq}
\renewcommand{\bar}{\overline}
\renewcommand{\tilde}{\widetilde}
\renewcommand{\d}{\mathrm{d}}
\DeclareMathOperator{\supp}{supp}
\newcommand{\la}{\left\langle}
\newcommand{\ra}{\right\rangle}
\renewcommand{\H}{\mathsf{H}}
\newcommand{\pj}{\mathrm{p}}
\newcommand{\lf}{\mathrm{l}}
\newcommand{\cH}{\mathcal{H}}
\newcommand{\HJ}{\mathrm{HJ}}
\renewcommand{\j}{{(j)}}
\newcommand{\itr}{\mathsf{int}}
\begin{document}

\author[H.-B. Chen]{Hong-Bin Chen}
\address[H.-B. Chen]{Institut des Hautes \'Etudes Scientifiques, France}
\email{hbchen@ihes.fr}

\title{A PDE perspective on the Aizenman--Sims--Starr scheme}

\begin{abstract}
In \cite{mourrat2022parisi,mourrat2020extending}, the limit free energy of an enriched mixed $p$-spin spin glass model is identified with the solution of a Hamilton--Jacobi equation. In this note, we remark that the Aizenman--Sims--Starr scheme yields a subsolution to the equation.
\end{abstract}

\maketitle

\section{Introduction}

\subsection{Mixed \texorpdfstring{$p$}{p}-spin model and enrichment}
Let $(\beta_p)_{p\geq 1}$ be a sequence of nonnegative number such that $\xi(r) = \sum_{p\geq 1}\beta_p^2 r^p$ is finite for all $r\in\R$. For each $N\in\N$, let $\Sigma_N = \{-1,+1\}^N$, and $(H_N(\sigma))_{\sigma\in\Sigma_N}$ be a centered Gaussian field with covariance
\begin{align}\label{e.model}
    \E H_N(\sigma^1)H_N(\sigma^2) = N\xi\left(\frac{\sigma^1\cdot\sigma^2}{N}\right),\quad\forall \sigma^1,\sigma^2\in\Sigma_N.
\end{align}
On $\Sigma_N$, we put measure $P_N = P_1^{\otimes N}$ where $P_1$ is uniform on $\{-1,+1\}$. Define the free energy associated with $H_N(\sigma)$ by
\begin{align*}
    \mathscr{F}_N = \frac{1}{N}\E\log\int \exp H_N(\sigma)\d P_N(\sigma).
\end{align*}

Then, we enrich the model by adding an external field. Let us describe the set of parameters for the field.
Let $\mathcal{M}$ be the collection of nondecreasing right-continuous maps from $[0,1)$ to $[0,\infty)$, and $\mathcal{M}_\mathrm{b}$ be the subcollection of bounded maps in $\mathcal{M}$. Note that the right-continuous inverse of $\mu\in\mathcal{M}$ is the distribution function of some probability measure on $[0,\infty)$. For each $\mu\in\mathcal{M}_\mathrm{b}$, we set $\mu(1) = \lim_{s\to1}\mu(s)$.

Let $\mathfrak{R}$ be the Ruelle probability cascade on some Hilbert space such that the distribution of the inner product $\alpha^1\cdot \alpha^2$ under $\E \mathfrak{R}^{\otimes 2}$ is uniform on $[0,1]$ where $\alpha^1, \alpha^2$ are independent samples from $\mathfrak{R}$. 
Conditionally on $\mathfrak{R}$, let $(w^\mu(\alpha))_{\alpha\in\supp(\mathfrak{R})}$ be a centered Gaussian process with covariance
\begin{align*}
    \E w^\mu(\alpha^1)w^\mu(\alpha^2) = 2\mu(\alpha^1\cdot\alpha^2),\quad\forall \alpha^1,\alpha^2\in\supp(\mathfrak{R}).
\end{align*}

For $(t,\mu) \in [0,\infty)\times \mathcal{M}_\mathrm{b}$, we define (notice an additional minus sign)
\begin{align}\label{e.F_n}
    F_N(t,\mu) = - \frac{1}{N}\log \iint \exp{H_N^{t,\mu}(\sigma,\alpha)}\d P_N(\sigma) \d \mathfrak{R}(\alpha)
\end{align}
with enriched Hamiltonian
\begin{align}\label{e.enriched}
    H_N^{t,\mu}(\sigma,\alpha) = \sqrt{2t}H_N(\sigma) + \sum_{i=1}^N\sigma_i w_i^\mu(\alpha) 
    - N t\xi\left(N^{-1}|\sigma|^2\right)-\mu(1)|\sigma|^2
\end{align}
where $(w^\mu_i(\alpha))_{i\geq 1}$ are i.i.d.\ copies of $w^\mu(\alpha)$ conditioned on $\mathfrak{R}$ which are also independent of $H_N(\sigma)$. The last two terms in $H_N^{t,\mu}(\sigma,\alpha)$ are added to normalize $\E \exp H_N^{t,\mu}(\sigma,\alpha) = 1$. Due to $\sigma\in \Sigma_N$, we simply have $|\sigma|^2 =N$ in the above. 

We write $\bar F_N = \E F_N$.
By \cite[Proposition~3.1]{mourrat2020free} (see also \cite[Theorem~1]{guerra2001sum}),
\begin{align}\label{e.F_N-lip}
    \left|\bar F_N(t,\mu) - \bar F_N(t,\mu')\right|\leq \left|\mu -\mu'\right|_{L^1}
\end{align}
for all $\mu,\mu'\in\mathcal{M}_\mathrm{b}$ and $t\geq 0$.
Hence, we can extend $\bar F_N$ to $\mathcal{M}\cap L^1$ where $L^1 = L^1([0,1))$. Since it is more convenient to work in a Hilbert space, we set $\cH= L^2([0,1))$ and $\C = \mathcal{M}\cap\cH$. Note that $\C$ is a closed convex cone in $\cH$ and $\C$ has an empty interior with respect to $\cH$.

We can recover the original free energy via the relation
\begin{align*}\mathscr{F}_N=- \bar F_N\left(\frac{1}{2},0\right)+\frac{1}{2}\xi(1)
\end{align*}
where $0$ is the constantly zero map in $\mathcal{M}$.

Note that $\bar F_N(0,\cdot)$ does not depend on $N$. We thus set $\psi = \bar F_1(0,\cdot)$ which can be expressed as
\begin{align}\label{e.psi}
    \psi(\mu) =- \E\log \la \cosh(w^\mu(\alpha))\ra + \mu(1),\quad\forall \mu \in \mathcal{M}_\mathrm{b},
\end{align}
where $\la \cdot\ra =\mathfrak{R}$.

When $\mu$ is a step function with finite steps, the random array $(\mu(\alpha^l\cdot\alpha^{l'}))_{l,l'\in\N}$ forms a discrete Ruelle probability cascade, and $F_N$ can be expressed more familiarly in terms of a tree with weights given by Poisson--Dirichlet cascades (see \cite[(3.8)]{mourrat2020free}).

\subsection{Previous works and the main remark}

The Parisi formula (e.g.\ see \cite[Section~3.1]{pan}) gives the limit free energy $\lim_{N\to\infty}\mathscr{F}_N$.
For the Sherrington-Kirkpatrick Model (for $\xi(r) = \beta^2_2 r^2$), Parisi proposed the formula in \cite{parisi79,parisi80}, which is later mathematically verified in \cite{gue03,Tpaper,panchenko2014parisi} (see also \cite{Tbook1,Tbook2,pan}):
\begin{theorem}[\cite{gue03,Tpaper,panchenko2014parisi}]For the original model \eqref{e.model}, $\lim_{N\to\infty}\mathscr{F}_N=\inf_{\mu\in \mathcal{M}_{[0,1]}}\mathscr{P}(\mu)$.
\end{theorem}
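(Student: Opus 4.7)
The plan is to route through the enriched free energy $\bar F_N$ and the Hamilton--Jacobi framework of \cite{mourrat2022parisi,mourrat2020extending}. By the identity $\mathscr{F}_N = -\bar F_N(1/2, 0) + \xi(1)/2$ stated just before \eqref{e.psi}, it suffices to identify $\lim_N \bar F_N(1/2, 0)$. The Lipschitz estimate \eqref{e.F_N-lip} provides equicontinuity of $\bar F_N$ in $\mu\in\C$, so along a subsequence $\bar F_N$ converges locally uniformly on $[0,\infty)\times\C$ to some $f_\ast$ satisfying $f_\ast(0,\cdot)=\psi$ by \eqref{e.psi}. If $f$ denotes the viscosity solution (in the sense of \cite{mourrat2022parisi}) of the HJ equation built from $\xi$ with initial datum $\psi$, then once $f_\ast=f$ is known, the theorem reduces to an algebraic identification of the Hopf--Lax-type representation of $-f(1/2,0)+\xi(1)/2$ with $\inf_{\mu\in\mathcal{M}_{[0,1]}}\mathscr{P}(\mu)$.

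For the upper bound $\limsup_N\mathscr{F}_N\leq\inf_\mu\mathscr{P}(\mu)$ I would invoke Guerra's interpolation, which already gives $\mathscr{F}_N\leq\mathscr{P}(\mu)$ at every finite $N$ and every $\mu\in\mathcal{M}_{[0,1]}$; in HJ terms this expresses that every subsequential limit $f_\ast$ is a supersolution. For the matching lower bound I would use the Aizenman--Sims--Starr comparison of $\bar F_{N+1}$ and $\bar F_N$: the remark of this note asserts that this comparison produces a subsolution inequality for $f_\ast$. Combining the super- and subsolution properties with the comparison principle for the HJ equation on $\C$ pins down $f_\ast=f$, independent of the extracted subsequence, so the full sequence $\bar F_N$ converges, and with it $\mathscr{F}_N$.

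The expected main obstacle is making the subsolution property from ASS precise. Since $\C$ is a closed convex cone with empty interior in $\cH$, standard bulk test-function arguments fail, and one must work with the intrinsic viscosity notion on $\C$ developed in \cite{mourrat2022parisi}. Concretely, the ASS cavity computation for $\bar F_{N+1}-\bar F_N$ must be linearized against each admissible test function touching $f_\ast$ from above at an interior time, and the resulting inequality must be matched with the Hamiltonian built from $\xi$ in the cone-admissible sense. The key technical input is sharp control of the overlap distribution of the cavity spin against the Ruelle cascade under the $\mu$-enrichment; once that control is in hand, uniqueness for the HJ problem closes the argument.
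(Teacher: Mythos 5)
This theorem is not proved in the paper at all: it is quoted from \cite{gue03,Tpaper,panchenko2014parisi}, and the surrounding text only recalls the standard route (Guerra's RSB interpolation for the upper bound; the Aizenman--Sims--Starr scheme together with Ghirlanda--Guerra identities and ultrametricity, applied to $\mathscr{A}_N=(N+1)\mathscr{F}_{N+1}-N\mathscr{F}_N$, for the lower bound). Your plan to reprove it through the enriched free energy and the Hamilton--Jacobi framework is in the spirit of this note, but as written it has genuine gaps. The most serious one is the subsolution step: you assert that ``the remark of this note'' gives a subsolution inequality for a subsequential limit $f_\ast$ of $\bar F_N$, and you propose to establish it by linearizing the ASS cavity computation against test functions touching $f_\ast$ from above. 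That is not what the note does, and it is not known how to do it. Proposition~\ref{p.a-s-s} produces, from each ASS subsequential limit, an \emph{explicit} function $g(t,\mu)=\psi(t\nu+\mu)-t\int_0^1\xi^*(\nu(s))\,\d s$ which is shown to be a subsolution (Lemma~\ref{l.subsol}, via finite-dimensional projections and the convex-duality inequality $\langle\nabla\psi^j,p\rangle\le\xi^*(p)+\xi(\nabla\psi^j)$); the bound $\limsup_N\bar F_N\le f$ then follows by comparing $f$ with the family of such subsolutions, not by proving that $f_\ast$ itself is a viscosity subsolution. Your ``main obstacle'' paragraph therefore describes an unproven (and likely hard) step, and an unnecessary one. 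A parallel looseness affects the other bound: Guerra's interpolation gives $\mathscr{F}_N\le\mathscr{P}(\mu)$ at finite $N$, but the literature you would need (\cite{mourrat2020nonconvex,mourrat2020free}) does not convert this into ``every subsequential limit is a supersolution''; it proves directly $\liminf_N\bar F_N\ge f$ by interpolating against the solution.

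There is also a circularity risk you should flag explicitly. If the ``Hopf--Lax-type representation'' you invoke is Theorem~\ref{t.hopf-lax} (the representation of $\lim_N\bar F_N$ from \cite{mourrat2022parisi,mourrat2020extending}), then the argument is circular, since those results are established by the same Parisi-formula machinery you are trying to prove. A non-circular version must use only: (i) PDE-level well-posedness and a Hopf--Lax formula for the viscosity solution of \eqref{e.hj} from \cite{chen2022hamilton}; (ii) $\liminf_N\bar F_N\ge f$ from \cite{mourrat2020nonconvex,mourrat2020free}; (iii) $\limsup_N\bar F_N\le f$ from this note (which itself rests on Panchenko's ultrametricity and synchronization); and (iv) the identification of the Hopf--Lax value at $(\tfrac12,0)$ with $\inf_{\mu\in\mathcal{M}_{[0,1]}}\mathscr{P}(\mu)$ via the change of variables $\nu=t\xi'(\zeta)$ and the identity $\theta(r)=\xi^*(\xi'(r))$, as in the proof of Lemma~\ref{l.ASS} and \cite[Proposition~4.1]{mourrat2022parisi}. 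Step (iv) is a real computation, not the ``algebraic identification'' you wave at, and steps (ii)--(iii) are exactly the Guerra and ASS inputs of the classical proof, so the PDE route repackages rather than replaces them.
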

\noindent Here, $\mathcal{M}_{[0,1]}$ is the collection of nondecreasing right-continuous maps from $[0,1)$ to $[0,1]$, and $\mathscr{P}(\mu)$ is the Parisi functional whose definition we refer to \cite[Section~3.1]{pan}.

As now clearly understood (see \cite{pan}), the upper bound of $\lim_{N\to\infty}\mathscr{F}_N$ follows from Guerra's RSB interpolation, and the lower bound can be obtained from the Aizenman--Sims--Starr scheme along with properties of the distributions of the overlap arrays that satisfy the Ghirlanda--Guerra identities. Let us briefly explain the latter. Setting $\mathscr{A}_N = (N+1)\mathscr{F}_{N+1} - N \mathscr{F}_{N}$, one gets $\mathscr{F}_N=\frac{1}{N}\sum_{j=0}^{N-1}\mathscr{A}_j$ and thus
\begin{align*}
    \liminf_{N\to\infty}\mathscr{F}_N \geq \liminf_{N\to\infty}\mathscr{A}_N.
\end{align*}
Then, from a subsequence realizing the infimum of $\mathscr{A}_N$, one extracts a further subsequence $(N_k)_{k\in\N}$ along which the Gibbs measure associated with $\mathscr{A}_{N_k}$ converges to an asymptotic Gibbs measure that satisfies the Ghirlanda--Guerra identities. Using properties of such measures, one proceeds to verify that
\begin{align*}
    \lim_{k\to\infty} \mathscr{A}_{N_k} = \mathscr{P}(\zeta)
\end{align*}
for some $\zeta\in\mathscr{M}_{[0,1]}$, which gives the lower bound $\liminf_{N\to\infty}\mathscr{F}_N \geq \inf_{\mu\in \mathcal{M}_{[0,1]}}\mathscr{P}(\mu)$.

Let us turn to a PDE perspective of the story.
Inspired by \cite{guerra2001sum, genovese2009mechanical, barra2010replica, barra2013mean} in the physics literature, J.-C.\ Mourrat mathematically initiated a PDE approach to recast the limit free energy of an enriched model as the solution to a Hamilton--Jacobi equation \cite{mourrat2018hamilton,mourrat2022parisi,mourrat2020extending,mourrat2020nonconvex,mourrat2020free}. 
It was proved in \cite{mourrat2022parisi,mourrat2020extending} that the limit free energy admits a representation by the Hopf--Lax formula:
\begin{theorem}[\cite{mourrat2022parisi,mourrat2020extending}]\label{t.hopf-lax}
For the enriched model~\eqref{e.enriched},
\begin{align}\label{e.hopf-lax_og}
    \lim_{N\to\infty}\bar F_N(t,\mu) =\sup_{\nu \in \C}\left(\psi(\nu) - t \int_0^1\xi^*\left(\frac{\nu(s)-\mu(s)}{t}\right)\d s\right),\quad\forall (t,\mu)\in[0,\infty)\times \C.
\end{align}
\end{theorem}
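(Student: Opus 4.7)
The strategy is to establish the two inequalities in \eqref{e.hopf-lax_og} separately: Guerra's RSB interpolation supplies pointwise lower bounds parameterized by $\nu \in \C$, and the Aizenman--Sims--Starr (ASS) scheme supplies the matching upper bound, which I would then reinterpret as a subsolution statement for the associated Hamilton--Jacobi equation (this reinterpretation being the paper's main remark). The relevant HJ equation on $[0,\infty)\times \C$ is $\dr_t u(t,\mu) = \int_0^1 \xi\bigl(\dr_\mu u(t,\mu)(s)\bigr)\,\d s$ with initial data $u(0,\cdot)=\psi$; since $\xi$ is convex and nondecreasing on $[0,\infty)$, the right-hand side of \eqref{e.hopf-lax_og} is its Hopf--Lax formula, and this formula coincides with the maximal viscosity subsolution of the equation.

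For the lower bound I would fix a step function $\nu \in \C$ and interpolate between the enriched Hamiltonian $H_N^{t,\mu}$ and a Ruelle-cascade reference model built from $w^\nu$. Gaussian integration by parts gives the derivative of the interpolating free energy in closed form, and the convexity of $\xi$ combined with the Young--Legendre inequality $\xi(a) + \xi^*(b) \geq ab$ yields
\[ \bar F_N(t,\mu) \geq \psi(\nu) - t \int_0^1 \xi^*\!\left(\frac{\nu(s) - \mu(s)}{t}\right) \d s. \]
The Lipschitz bound \eqref{e.F_N-lip} extends this to all $\nu \in \C$, and taking the supremum in $\nu$ gives $\liminf_N \bar F_N(t,\mu) \geq$ RHS of \eqref{e.hopf-lax_og}.

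For the upper bound I would run the ASS scheme in the enriched setting. Setting $\mathscr{A}_N(t,\mu) = (N+1)\bar F_{N+1}(t,\mu) - N \bar F_N(t,\mu)$, the telescoping identity $\bar F_N = \frac{1}{N}\sum_{j<N} \mathscr{A}_j$ combined with Ces\`aro gives $\limsup_N \bar F_N \leq \limsup_N \mathscr{A}_N$. The cavity decomposition of $\mathscr{A}_N$ couples $N$ spins governed by $\bar F_N$ to a single new cavity spin; after adding a Parisi-type perturbation and passing to a subsequence $N_k$ along which the limiting overlap array satisfies the Ghirlanda--Guerra identities, the asymptotic overlap distribution is generated by some $\zeta \in \C$, and algebraic simplification of the cavity expression yields
\[ \lim_{k\to\infty}\mathscr{A}_{N_k}(t,\mu) = \psi(\zeta) - t \int_0^1 \xi^*\!\left(\frac{\zeta(s) - \mu(s)}{t}\right) \d s, \]
which is trivially bounded above by the sup in \eqref{e.hopf-lax_og}. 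In PDE language, the map $(t,\mu)\mapsto \psi(\zeta) - t\int_0^1 \xi^*((\zeta-\mu)/t)\,\d s$ is a member of the Hopf--Lax family and hence a subsolution of the HJ equation matching $\psi$ at $t=0$; this is the subsolution property announced in the abstract.

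The main obstacle is the algebraic identification of the cavity expression with the integrand $\xi^*((\zeta-\mu)/t)$. The raw ASS computation produces an expression involving $\xi$ evaluated on overlap variables, and recasting it in the $\xi^*$ form requires convex duality together with the cascade structure enforced by the Ghirlanda--Guerra identities and a careful choice of Parisi perturbation that guarantees these identities in the limit without altering $\lim_N \bar F_N$. A secondary issue is the uniformity in $(t,\mu)$ needed to upgrade the pointwise inequality into a genuine viscosity subsolution of the HJ equation on all of $[0,\infty)\times \C$, which is what the PDE perspective of the note ultimately seeks to articulate.
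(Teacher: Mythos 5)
First, a point of orientation: the paper does not prove Theorem~\ref{t.hopf-lax} at all --- it is imported from \cite{mourrat2022parisi,mourrat2020extending}. The note's own results (Lemmas~\ref{l.ASS} and \ref{l.subsol}, Proposition~\ref{p.a-s-s}, Corollary~\ref{c}) only establish $\limsup_{N\to\infty}\bar F_N\leq f$ for the viscosity solution $f$ of \eqref{e.hj}; the complementary bound $\liminf_{N\to\infty}\bar F_N\geq f$ is cited from \cite{mourrat2020nonconvex,mourrat2020free}, and the identification of $f$ with the right-hand side of \eqref{e.hopf-lax_og} is made by invoking Theorem~\ref{t.hopf-lax} itself. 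Your proposal is therefore not a reconstruction of this paper's argument but essentially of the original one in \cite{mourrat2020extending}: Guerra interpolation for the lower bound on $\bar F_N$ (upper bound in the usual sign convention) and the Aizenman--Sims--Starr scheme for the other inequality. That is a legitimate and indeed the historical route; what the note's PDE route buys instead is that the Guerra direction is replaced by the comparison principle plus the cited supersolution-type bound, and the ``algebraic identification with the sup'' is replaced by the observation that every ASS limit point extends to a subsolution. Note, however, that your outer PDE framing (``the Hopf--Lax formula coincides with the maximal viscosity subsolution of \eqref{e.hj}'') is not proved in this note and is not needed for your Guerra+ASS argument; if you rely on it, it requires a separate verification that the right-hand side of \eqref{e.hopf-lax_og} solves the modified equation \eqref{e.hj} on the cone $\C$ in infinite dimensions.

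Within your sketch there are concrete imprecisions. (i) The ASS limit is not $\psi(\zeta)-t\int_0^1\xi^*\bigl(\frac{\zeta(s)-\mu(s)}{t}\bigr)\d s$ with the same $\zeta$ that generates the limiting spin overlap: as in Lemma~\ref{l.ASS}, after the cavity computation and the identity $\theta(r)=\xi^*(\xi'(r))$ (which needs the normalization $\zeta(1)=1$), the limit is $\psi(t\xi'(\zeta)+\mu)-t\int_0^1\xi^*(\xi'(\zeta(s)))\d s$, i.e.\ the member of the Hopf--Lax family indexed by $\nu=t\xi'(\zeta)+\mu$. This is harmless for the upper bound (it is still dominated by the supremum), but the identity as you state it is wrong, and the convex-duality step is precisely where $\xi'$ enters. (ii) In the enriched model the Ghirlanda--Guerra identities alone are not enough: there are two overlaps (spin and cascade), and one needs Panchenko's synchronization mechanism (\cite[Theorem~4]{pan.multi}, used here with \cite[Proposition~5.2]{mourrat2020free}) to reduce to a single $\zeta$; moreover the identification is carried out for $(t_0,\mu_0)$ with $\mu_0\in\mathcal{M}_\mathrm{atom}$ and atomic $\zeta$, and extended by density together with the Lipschitz estimate \eqref{e.F_N-lip} --- your ``uniformity'' remark should be made explicit in this form. (iii) In the Guerra direction, the interpolation naturally yields the bound for $\nu$ in the reparametrized family $t\xi'(\zeta)+\mu$ (equivalently, in terms of the Parisi functional), and one must still argue that the supremum over all of $\C$ is not strictly larger; asserting that Young's inequality gives the bound for an arbitrary step function $\nu$ skips this reparametrization issue, which is exactly the content of the manipulations in \cite[Proposition~4.1]{mourrat2022parisi}.
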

\noindent Here, $\psi$ was given in \eqref{e.psi}, and 
\begin{align}\label{e.xi^*}
    \xi^*(s) = \sup_{r\geq 0}\{r s - \xi(r)\},\quad\forall s\in\R.
\end{align}
The right-hand side of \eqref{e.hopf-lax_og} is expected to solve an infinite-dimensional Hamilton--Jacobi equation, formally expressed as
\begin{align}\label{e.hje_raw}
    \partial_t f - \int_0^1 \xi(\nabla f)\d s =0\quad\text{on $[0,\infty)\times\C$}.
\end{align}
Here, $\nabla$ is taking derivative in terms of $\mu\in\C\subset\cH$. Hence, for each $(t,\mu)$, $\nabla f(t,\mu)$ is understood to be an element in $\cH$. Writing $s\mapsto (\nabla f(t,\mu))(s)$, we interpret the integral in \eqref{e.hje_raw} at $(t,\mu)$ as $\int_0^1\xi((\nabla f(t,\mu))(s))\d s$.

The well-posedness of~\eqref{e.hje_raw} in the sense of viscosity solutions is later proved in~\cite{chen2022hamilton}. Due to technical issues in infinite dimensions, equation~\eqref{e.hje_raw} is modified into 
\begin{align}\label{e.hj}
    \partial_t f - \H(\nabla f) =0,\quad\text{on $[0,\infty)\times\C$}
\end{align}
where the nonlinearity $\H:\cH\to\R$ is a Lipschitz regularization of $\iota\mapsto \int_0^1\xi(\iota(s))\d s$, to be described in Section~\ref{s.nonl_terms}.

We want to remark that the Aizenman--Sims--Starr scheme yields something meaningful under this PDE perspective. For each $N\in\N$, set
\begin{align}\label{e.A_N}
    A_N = (N+1)\bar F_{N+1} - N \bar F_N.
\end{align}
\begin{proposition}\label{p.a-s-s}
For every $(t_0,\mu_0)$ in a dense subset of  $[0,\infty)\times \C$, every subsequence of $(A_N(t_0,\mu_0))_{N\in\N}$ has a further subsequence $(A_{N_k}(t_0,\mu_0))_{k\in\N}$ such that
\begin{align*}
    \lim_{k\to\infty} A_{N_k}(t_0,\mu_0) = \psi(t_0\nu+\mu_0)-t_0\int_0^1\xi^*(\nu(s))\d s
\end{align*}
for some $\nu\in\mathcal{M}_\mathrm{b}$. Moreover, the function
\begin{align*}
    [0,\infty)\times\C\ni(t,\mu)\mapsto \psi(t\nu + \mu) -t\int_0^1 \xi^*(\nu(s))\d s
\end{align*}
is a viscosity subsolution of \eqref{e.hj}.
\end{proposition}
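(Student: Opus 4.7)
I would treat the two assertions of the proposition separately. The first is a cavity computation via the enriched Aizenman--Sims--Starr increment combined with a Ghirlanda--Guerra extraction; the second is a generic feature of each candidate function in the Hopf--Lax representation of Theorem~\ref{t.hopf-lax}.

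For the first assertion, I would write $A_N(t_0,\mu_0)=(N+1)\bar F_{N+1}(t_0,\mu_0)-N\bar F_N(t_0,\mu_0)$ and decompose the enriched Hamiltonian $H_{N+1}^{t_0,\mu_0}$ on $\Sigma_{N+1}\times\supp(\mathfrak{R})$ into a copy of $H_N^{t_0,\mu_0}$ on the first $N$ spins, plus a cavity Gaussian field coupling $\sigma_{N+1}$ to $(\sigma,\alpha)$, plus a deterministic remainder. Integrating out $\sigma_{N+1}\in\{-1,+1\}$ and performing Gaussian integration by parts would produce an expression of the form
\begin{align*}
A_N(t_0,\mu_0)=-\E\log\la\cosh(Y_N(\sigma,\alpha))\ra+(\text{deterministic correction})+o(1),
\end{align*}
where $\la\cdot\ra$ denotes the Gibbs average at $(N,t_0,\mu_0)$ and, conditionally on its samples, $Y_N$ is centered Gaussian with covariance $2t_0\xi'(\sigma^1\cdot\sigma^2/N)+2\mu_0(\alpha^1\cdot\alpha^2)$. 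To extract a limit, I would add the standard Ghirlanda--Guerra perturbation to the Hamiltonian (which by \eqref{e.F_N-lip} does not affect the $N\to\infty$ limit) and pass to a sub-subsequence $(N_k)$ along which the joint law of the $(\sigma,\alpha)$-overlaps under $\la\cdot\ra$ converges to a Panchenko-type ultrametric array encoded by a single order parameter $\nu\in\mathcal{M}_\mathrm{b}$. By construction, the limiting covariance of $Y_{N_k}$ equals that of $w^{t_0\nu+\mu_0}$, and the deterministic correction matches $-t_0\int_0^1\xi^*(\nu(s))\,\d s$, so invoking \eqref{e.psi} yields
\begin{align*}
\lim_{k\to\infty}A_{N_k}(t_0,\mu_0)=\psi(t_0\nu+\mu_0)-t_0\int_0^1\xi^*(\nu(s))\,\d s.
\end{align*}
The dense-subset restriction enters because the cavity error terms and the Ghirlanda--Guerra perturbation are most easily controlled at points of differentiability of $\bar F_N$, which by convexity in $\mu$ and \eqref{e.F_N-lip} form a dense set in $[0,\infty)\times\C$.

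For the second assertion, set $g(t,\mu):=\psi(t\nu+\mu)-t\int_0^1\xi^*(\nu(s))\,\d s$. Since $\psi$ is convex and Lipschitz on $\C$ and $(t,\mu)\mapsto t\nu+\mu$ is affine, $g$ is convex on $[0,\infty)\times\C$. At any $(t_*,\mu_*)$ where $\psi$ is differentiable at $t_*\nu+\mu_*$, direct differentiation gives $\partial_t g=\la\nabla\psi(t_*\nu+\mu_*),\nu\ra-\int_0^1\xi^*(\nu(s))\,\d s$ and $\nabla_\mu g=\nabla\psi(t_*\nu+\mu_*)$, so Young's inequality $\la\nabla\psi,\nu\ra\le\int_0^1\xi(\nabla\psi(s))\,\d s+\int_0^1\xi^*(\nu(s))\,\d s$ (valid pointwise since $\nu\ge0$ and, classically for such spin-glass free energies, $\nabla\psi\ge0$) yields $\partial_t g\le\int_0^1\xi(\nabla g(s))\,\d s$, which lies below $\H(\nabla g)$ by the definition of the Lipschitz regularization in Section~\ref{s.nonl_terms}. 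The viscosity subsolution property then follows from convexity of $g$: the superdifferential $D^+g(t_*,\mu_*)$ is automatically empty off the differentiability locus of $g$, while on that locus the classical inequality above applies; the framework of \cite{chen2022hamilton} is designed to accommodate the empty interior of $\C$ in $\cH$.

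The hardest step will be the cavity analysis in the first part: one must handle the cascade $\mathfrak{R}$ and the cascade-valued external field $w^{\mu_0}$ simultaneously with the spin variable, and reduce the joint subsequential overlap structure to a single $\nu\in\mathcal{M}_\mathrm{b}$ not constrained to $[0,1]$ (in contrast with the classical Parisi setting), reflecting the extra degree of freedom from the $\mu_0$-enrichment. Matching this $\nu$ with the covariance of $w^{t_0\nu+\mu_0}$ and with the Fenchel-dual correction $t_0\int_0^1\xi^*(\nu(s))\,\d s$ is the technical heart of the argument; the subsolution portion is mostly bookkeeping once convexity of $g$ and the correct relation between $\H$ and $\int_0^1\xi(\cdot)\,\d s$ are in hand.
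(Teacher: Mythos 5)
Your overall architecture matches the paper for the first assertion (cavity/ASS computation plus a Ghirlanda--Guerra-type extraction, then identification of the limit), but even there two points are blurred. First, the second term produced by the ASS scheme is not a ``deterministic correction'': it is $\E\log\la\exp y_N(\sigma)\ra$ for a Gaussian field $y_N$ with covariance $2t_0\theta(\sigma^1\cdot\sigma^2/N)$, $\theta(r)=r\xi'(r)-\xi(r)$, and turning it into $t_0\int_0^1\xi^*(\nu(s))\,\d s$ is the substantive identification: one approximates the limiting spin-overlap law $\zeta$ by elements of $\mathcal{M}_\mathrm{atom}$ with $\zeta(1)=1$, computes both Gibbs averages through the discrete Ruelle cascade, and uses $\theta=\xi^*\circ\xi'$ with $\nu=\xi'(\zeta)$; you assert the match rather than derive it. Second, the reduction to ``a single order parameter'' requires synchronizing the spin overlap with the cascade overlap: the paper uses the invariance of the cascade to keep $R^{\mathrm{r}}_{1,2}\stackrel{\mathrm{d}}{=}\mu_0(U)$ and Panchenko's synchronization theorem to get the joint law $(\zeta(U),\mu_0(U))$. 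Relatedly, your explanation of the dense set is not the right one: the restriction is to $[0,\infty)\times\mathcal{M}_\mathrm{atom}$ because an atomic $\mu_0$ makes the cascade discrete and the computation tractable; differentiability or convexity of $\bar F_N$ plays no role.

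The genuine gaps are in your subsolution argument. (i) The inequality ``$\int_0^1\xi(\nabla g)\,\d s\le\H(\nabla g)$'' is not a consequence of the definition of $\H$: since $\H(\iota)=\inf\{\H_{\bar\xi}(\mu):\mu\in\C\cap(\iota+\C^*)\}$ with $\bar\xi$ a Lipschitz truncation of $\xi$, one only has $\H=\H_\xi$ on monotone arguments with values in $[0,1]$ (see \eqref{e.H=H_xi}); so you need $\nabla\psi(t\nu+\mu)$ to lie in $\C$ (nondecreasing in $s$) and be bounded by $1$, which is exactly \eqref{e.grad_psi_in}, obtained from the monotonicity of the derivatives in \cite{mourrat2020free} together with the Lipschitz bound \eqref{e.F_N-lip}; positivity of $\nabla\psi$ alone does not suffice, and without this the claimed comparison with $\H$ can fail. (ii) The convexity-plus-a.e.-differentiability route is not available as stated: convexity of $\psi$ is asserted without proof or citation, and, more importantly, $\C$ has empty interior in $\cH$, so there is no guarantee that $\psi$ or $g$ has any Fr\'echet differentiability points, Asplund/Rademacher-type results apply only on the interior of the domain, and the claim that the superdifferential is empty off the differentiability locus rests on nonemptiness of the subdifferential, which again needs interior points; relating local maxima of $g-\phi$ against smooth test functions on the cone to superdifferentials also requires an argument in this constrained setting. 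The paper avoids all of this by a finite-dimensional approximation: the projected function $g_j$ built from $\psi^j$ (which is genuinely differentiable, with derivative computed in \cite{mourrat2022parisi}) is a classical, hence viscosity, subsolution on the interior of $\C^j$, extends to $\C^j$ by the $(\C^j)^*$-monotonicity of $\H^j$ via \cite{chen2022hamiltonCone}, and then one invokes the stability of subsolutions under local uniform limits along projections \cite[Proposition~3.9]{chen2022hamilton}, with the quantitative bound $|\mu-\mu^\j|_{L^1}\le 2^{(3-j)/2}|\mu|_\cH$ and a Jensen/Fatou argument handling the convergence of $\int_0^1\xi^*(\nu^\j(s))\,\d s$. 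You would need either to adopt such an approximation scheme or to supply genuinely new arguments for (i) and (ii).
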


In other words, any limit from the Aizenman--Sims--Starr scheme extends to a subsolution. Note that the above function is equal to $\psi$ at $t=0$. Also, the comparison principle of viscosity solutions implies that the solution of \eqref{e.hj} is the supremum of subsolutions. Hence, we can obtain a one-sided bound. (Recall that there is an additional minus sign in the definition of $F_N$ in \eqref{e.F_n}; hence, the following upper bound corresponds to a lower bound in the usual spin glass notation, as expected for the Aizenman--Sims--Starr scheme.)
\begin{corollary}\label{c}
Let $f$ be the unique viscosity solution to \eqref{e.hj} with initial condition $f(0,\cdot)=\psi$.
Then, $\limsup_{N\to\infty}\bar F_N\leq f$ pointwise everywhere on $[0,\infty)\times\C$.
\end{corollary}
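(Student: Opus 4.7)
The plan is to chain three ingredients: a Ces\`aro identity for $\bar F_N$ in terms of the Aizenman--Sims--Starr differences $A_N$, Proposition~\ref{p.a-s-s}, and the comparison principle for viscosity solutions of~\eqref{e.hj}. From the definition~\eqref{e.A_N}, the sum telescopes as $\sum_{j=0}^{N-1}A_j = N\bar F_N$, so $\bar F_N = \frac{1}{N}\sum_{j=0}^{N-1}A_j$. The standard inequality $\limsup_N \frac{1}{N}\sum_{j=0}^{N-1} a_j \leq \limsup_N a_N$ then yields
\begin{align*}
\limsup_{N\to\infty}\bar F_N(t_0,\mu_0)\leq \limsup_{N\to\infty} A_N(t_0,\mu_0)
\end{align*}
at every point $(t_0,\mu_0)\in[0,\infty)\times\C$. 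It therefore suffices to show $\limsup_N A_N \leq f$ pointwise.

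Next, fix $(t_0,\mu_0)$ in the dense subset supplied by Proposition~\ref{p.a-s-s}, and select a subsequence $(N'_k)$ along which $A_{N'_k}(t_0,\mu_0)$ converges to this $\limsup$. Applying Proposition~\ref{p.a-s-s} to $(N'_k)$ produces a further subsequence $(N_k)$ and some $\nu\in\mathcal{M}_\mathrm{b}$ with
\begin{align*}
\lim_{k\to\infty}A_{N_k}(t_0,\mu_0) = \psi(t_0\nu+\mu_0) - t_0\int_0^1 \xi^*(\nu(s))\,\d s =: g_\nu(t_0,\mu_0).
\end{align*}
Because $(N'_k)$ already converged, both limits agree with $\limsup_N A_N(t_0,\mu_0)$. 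The same proposition guarantees that $g_\nu:[0,\infty)\times\C\to\R$ is a viscosity subsolution of~\eqref{e.hj}, and a direct inspection shows that its initial datum $g_\nu(0,\cdot)=\psi$ coincides with that of $f$. The comparison principle of~\cite{chen2022hamilton} then yields $g_\nu \leq f$ everywhere on $[0,\infty)\times\C$; evaluating at $(t_0,\mu_0)$ gives $\limsup_N A_N(t_0,\mu_0)\leq f(t_0,\mu_0)$, hence the corollary on the dense subset.

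To pass from the dense subset to all of $[0,\infty)\times \C$, I would exploit the Lipschitz estimate~\eqref{e.F_N-lip} (which, being uniform in $N$, transfers to $\limsup_N \bar F_N$) together with the Lipschitz continuity of the viscosity solution $f$ in $\mu$ and its continuity in $t$, by a routine density approximation. The delicate point I expect is not this Ces\`aro/comparison chain but rather the verification that the comparison principle of~\cite{chen2022hamilton} genuinely applies to the specific subsolution $g_\nu$ on the cone $\C$: one needs the appropriate growth and Lipschitz bounds, which should follow from the Lipschitzness of $\psi$ coming from~\eqref{e.F_N-lip} at $N=1$, $t=0$, together with the convexity of $\xi^*$. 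Of course the whole argument rests on the subsolution claim within Proposition~\ref{p.a-s-s}, which is the genuinely substantial input; once it is in hand, the corollary reduces to the pointwise comparison and an approximation.
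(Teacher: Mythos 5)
Your proposal is correct and follows essentially the same route as the paper: the telescoping/Ces\`aro bound $\limsup_N \bar F_N \leq \limsup_N A_N$, Proposition~\ref{p.a-s-s} applied along a subsequence realizing the $\limsup$, the comparison principle (the paper phrases it as $f$ being the supremum of Lipschitz subsolutions with initial datum $\leq\psi$, which is the same use), and a density argument via the uniform-in-$N$ Lipschitz estimate~\eqref{e.F_N-lip}. The Lipschitz verification you flag for $g_\nu$ is handled in the paper exactly as you suggest (Lipschitzness of $\psi$ plus boundedness of $\nu$), so there is no gap.
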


\noindent
It is proven in \cite{mourrat2020nonconvex,mourrat2020free} that, for a wide class of spin glass models including models with nonconvex spin interactions, the limit free energy (with minus sign) is bounded from below by the viscosity solution, which gives $\liminf_{N\to\infty}\bar F_N\geq f$ here. This along with the above corollary implies that $\liminf_{N\to\infty}\bar F_N = f$ and thus the Hopf--Lax formula in Theorem~\ref{t.hopf-lax} indeed represents the viscosity solution.

\subsection*{Acknowledgements}
The author would like to thank Jean-Christophe Mourrat for helpful comments. This project has received funding from the European Research
Council (ERC) under the European Union’s Horizon 2020 research and innovation programme (grant agreement No.\ 757296).

\section{Proofs}

We first introduce definitions to make sense of equation~\eqref{e.hj} in Section~\ref{s.defs}. Then, we prove Proposition~\ref{p.a-s-s} and its corollary after stating two lemmas in Section~\ref{s.pf_main}. Finally, we prove these two lemmas in Sections~\ref{s.pf_ASS} and \ref{s.pf_subsol}.

\subsection{Definitions}\label{s.defs}
We first define differentiability in infinite dimensions, then construct the modified nonlinearity $\H$ appearing in equation~\eqref{e.hj}, and lastly define the notion of viscosity solutions.
\subsubsection{Derivatives}
We define derivatives in the Fr\'echet sense. 
For $(t,\mu)\in [0,\infty)\times\C$, we interpret $t$ as the temporal variable and $\mu$ the spatial variable. 
For any Hilbert space $\mathcal{X}$, we often denote its inner product and norm as $\la\cdot,\cdot\ra_\mathcal{X}$ and $|\cdot|_\mathcal{X}$, respectively. A function $\phi:(0,\infty)\times\C\to\R$ is said to be \textit{differentiable} at $(t,\mu)$ if there is $\mathbf{v} \in \R\times\cH$ such that
\begin{align*}
    \phi(s,\nu)=\phi(t,\mu) + \la \mathbf{v},(s-t,\nu-\mu)\ra_{\R\times\cH} +o(|(s-t,\nu-\mu)|_{\R\times\cH})
\end{align*}
as $(s,\nu)\to(t,\mu)$ in $ (0,\infty)\times\C$. Note that $\C$ is not open and has an empty interior in $\cH$. However, since the closure of the span of $\C$ is $\cH$, we can verify that, if such $\mathbf{v}$ exists, it must be unique and we denote it by $(\partial_t\phi(t,\mu), \nabla \phi(t,\mu))$. An everywhere differentiable function $\phi:(0,\infty)\times\C$ is said to be \textit{smooth} if, at every $(t,\mu)$,
\begin{align*}
    \phi(s,\nu)=\phi(t,\mu) + \la (\partial_t\phi(t,\mu), \nabla \phi(t,\mu)),\,(s-t,\nu-\mu)\ra_{\R\times\cH} +O\left(|(s-t,\nu-\mu)|^2_{\R\times\cH}\right)
\end{align*}
as $(s,\nu)\to(t,\mu)$ in $(0,\infty)\times\C$.

\subsubsection{Nonlinear terms}\label{s.nonl_terms}
For any function $\varphi:\R\to\R $, we set
\begin{align}\label{e.def_H_phi}
    \H_\varphi(\iota) = \int_0^1\varphi(\iota(s))\d s
\end{align}
for all $\iota \in\cH$ such that the right-hand side is finite. Then, the formal equation~\eqref{e.hje_raw} can be rewritten as $\partial_t f - \H_\xi(\nabla f)=0$.

Previously, we mentioned that the nonlinearity $\H$ in \eqref{e.hj} is a modification of $\H_\xi$. Here, we define $\H$ precisely. First, let $\bar \xi:\R\to \R$ be a regularization of $\xi$ such that $\bar\xi =\xi $ on $[0,1]$, and $\bar\xi$ is Lipschitz, nondecreasing, bounded from below, and convex. 
It is easy to see that $\bar \xi$ exists (see \cite[Lemma~4.2]{chen2022hamilton} for proof). We define
\begin{align*}
    \H(\iota) = \inf\left\{\H_{\bar\xi}(\mu):\mu \in \C\cap (\iota+\C^*)\right\},\quad\forall \iota\in\cH,
\end{align*}
where $\C^*$ is the dual cone of $\C$ in $\cH$ defined by $\C^* = \{\iota\in\cH:\la\iota,\mu\ra_\cH\geq 0,\ \forall \mu\in\C\}$.

Then, we can check that $\H:\cH\to\R$ is Lipschitz, bounded from below, convex, and $\C^*$-nondecreasing in the sense that
\begin{align}\label{e.C^*-nond}
    \H(\iota)\geq \H(\iota')\quad\text{if }\iota-\iota'\in\C^*
\end{align}
(see \cite[Lemma~4.4]{chen2022hamilton} for proof). Since $\bar\xi=\xi$ on $[0,1]$, we have
\begin{align}\label{e.H=H_xi}
    \H(\mu ) =\H_\xi(\mu) \quad\text{if $\mu\in\C$ and $0\leq \mu\leq 1$ a.s.}
\end{align}

We need this modification for technical reasons due to the infinite dimensionality of the equation. (One can show that the viscosity solution of equation~\eqref{e.hj} does not depend on modifications, but this is not important for the purpose here.) 

\subsubsection{Viscosity solutions}
We make sense of \eqref{e.hj} in the viscosity sense.
\begin{definition}[Viscosity solutions]\label{d.vs}
A continuous function $f:[0,\infty)\times\C\to\R$ is a \textit{viscosity subsolution} (resp.\ \textit{supersolution}) of \eqref{e.hj} if, whenever $f-\phi$ achieves a local maximum (resp.\ minimum) at some $(t,\mu)\in (0,\infty)\times\C$ for some smooth function $\phi:(0,\infty)\times\C\to\R$, it holds that
\begin{align*}
    \left(\partial_t\phi - \H(\nabla\phi)\right)(t,\mu)\leq 0 \quad\text{(resp.\ $\geq 0$)}.
\end{align*}
If $f$ is both a viscosity subsolution and supersolution of \eqref{e.hj}, we call $f$ a \textit{viscosity solution} of \eqref{e.hj}.
\end{definition}

\noindent Here, a local extremum is understood to be an extremum over some closed bounded ball of $\R\times\cH$ intersected with $(0,\infty)\times\C$.

The well-posedness of equation \eqref{e.hj} in the viscosity sense, consisting of the comparison principle and the existence of Lipschitz solutions, was verified in \cite[Section~3]{chen2022hamilton}. In particular, \eqref{e.hj} has a unique viscosity solution $f$ with initial condition $f(0,\cdot)=\psi$ and $f$ is Lipschitz (with the respect to $|\cdot|_{\R\times\cH}$ on $\R_+\times\C$).

\subsection{Proofs of Proposition~\ref{p.a-s-s} and Corollary~\ref{c}}\label{s.pf_main}

Let $\mathcal{M}_\mathrm{atom}$ be a subset of $\C$ consisting of nondecreasing step functions  $\sum_{k=1}^n a_k \mathds{1}_{[s_{k-1},s_k)}$ where $a_k\leq a_{k+1}$, $0=s_0<s_1<\cdots<s_n=1$, and $n\in\N$. Throughout, we let $U$ be the uniform random variable on $[0,1)$. Note that, for every $\mu\in \mathcal{M}_\mathrm{atom}$, the law of $\mu(U)$ is an atomic measure. Recall the definition of $A_N$ in~\eqref{e.A_N}.
\begin{lemma}[Aizenman--Sims--Starr Scheme]\label{l.ASS}
For each $(t_0,\mu_0) \in [0,\infty)\times \mathcal{M}_\mathrm{atom}$, every subsequence of $(A_N(t_0,\mu_0))_{N\in\N}$ has a further subsequence $(A_{N_k}(t_0,\mu_0))_{k\in\N}$ such that
\begin{align*}
    \lim_{k\to\infty} A_{N_k}(t_0,\mu_0) = \psi(t_0\nu+\mu_0)-t_0\int_0^1\xi^*(\nu(s))\d s
\end{align*}
for some $\nu\in\mathcal{M}_\mathrm{b}$.
\end{lemma}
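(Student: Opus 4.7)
The plan is to implement the Aizenman--Sims--Starr cavity scheme adapted to the enriched model, extract convergent subsequences by compactness of the overlap array, and match the limit against the prescribed form via Legendre duality. First, I would write down a cavity decomposition for $A_N(t_0,\mu_0)$. Splitting a configuration in $\Sigma_{N+1}$ as $(\tau,\epsilon)$ with $\tau\in\Sigma_N$ and $\epsilon\in\{-1,+1\}$, a standard Gaussian interpolation (matching covariances up to $o_N(1)$) together with marginalization of $\epsilon$ over $P_1$ yields an identity of the form
\[
A_N(t_0,\mu_0)=-\E\log\la\cosh\Ll(\sqrt{2t_0}\,y(\tau)+w^{\mu_0}_{N+1}(\alpha)\Rr)\ra_N+\E\log\la\exp(-z(\tau))\ra_N+C_N,
\]
where $y$ and $z$ are independent centered Gaussian fields on $\Sigma_N$ with covariances $2t_0\xi'(\tau^1\cdot\tau^2/N)$ and $2t_0\theta(\tau^1\cdot\tau^2/N)$ respectively (for $\theta(r)=r\xi'(r)-\xi(r)\geq 0$), $w^{\mu_0}_{N+1}$ is a fresh copy of the enrichment field appearing at position $N+1$, $\la\cdot\ra_N$ is the Gibbs average of the $N$-spin enriched model, and $C_N$ is an explicit constant converging to a limit.

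Second, I would extract a convergent subsequence and identify the synchronized overlap structure. The joint overlap array $(\tau^l\cdot\tau^{l'}/N,\,\alpha^l\cdot\alpha^{l'})_{l,l'\geq 1}$ takes values in the compact space $[-1,1]^{\N^2}\times[0,1]^{\N^2}$, so any subsequence of $(A_N)$ contains a further subsequence $(N_k)$ along which its joint distribution under $\E\la\cdot\ra_{N_k}^{\otimes\N}$ converges. Since $\mu_0\in\mathcal{M}_\mathrm{atom}$, the enrichment field $w^{\mu_0}$ endows $\alpha$ with a discrete Ruelle-cascade weighting that survives in the limit; adding a vanishing Panchenko-type perturbation (chosen compatible with the cavity identity above) enforces the Ghirlanda--Guerra identities on the limit, and Panchenko's synchronization theorem yields a nondecreasing $\bar q_0:[0,1)\to[-1,1]$ such that the limiting spin overlap equals $\bar q_0(\alpha^1\cdot\alpha^2)$ in distribution. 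Setting $\nu=\xi'\circ\bar q_0\in\mathcal{M}_\mathrm{b}$, the covariance of $\sqrt{2t_0}\,y+w^{\mu_0}_{N+1}$ at two replicas converges to $2(t_0\nu+\mu_0)(\alpha^1\cdot\alpha^2)$, so the first term of the displayed identity tends to $-\E\log\la\cosh(w^{t_0\nu+\mu_0}(\alpha))\ra$, which together with the normalization absorbed into $C_N$ reconstitutes $\psi(t_0\nu+\mu_0)$ by~\eqref{e.psi}; the $z$-term self-averages on the ultrametric tree to $-t_0\int_0^1\theta(\bar q_0(s))\,\d s$, and the Legendre identity $\theta(r)=r\xi'(r)-\xi(r)=\xi^*(\xi'(r))$ rewrites this as $-t_0\int_0^1\xi^*(\nu(s))\,\d s$. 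Summing the two contributions gives the claim.

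The main obstacle is the synchronization step: an arbitrary subsequential limit of $\la\cdot\ra_{N_k}$ need not satisfy the Ghirlanda--Guerra identities, so one must add a vanishing Panchenko perturbation and check that it is compatible with the cavity identity (so it does not disturb $\lim A_{N_k}$). The restriction $\mu_0\in\mathcal{M}_\mathrm{atom}$ is precisely what keeps the Ruelle cascade on $\alpha$ a finite tree, reducing the synchronization to a finite-dimensional statement amenable to Panchenko's theorem.
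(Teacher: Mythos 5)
Your overall route is the same as the paper's: an ASS/cavity computation expressing $A_N$ through a $\log\cosh$ cavity term and a $\log\exp$ correction term, a vanishing perturbation to force the Ghirlanda--Guerra identities, Panchenko-type synchronization to write the limiting spin overlap as a monotone function $\bar q_0$ of the cascade overlap, evaluation of the two terms via the Ruelle cascade, and the identity $\theta(r)=r\xi'(r)-\xi(r)=\xi^*(\xi'(r))$ with $\nu=\xi'\circ\bar q_0$. However, there is a concrete gap in the identification step: you match the cavity fields only through their covariance ``at two replicas'' and ignore the diagonal. The cavity field has variance $2t_0\xi'(1)+2\mu_0(1)$ and the correction field has variance $2t_0\theta(1)$, because the self-overlap is $1$; these do not converge to $2(t_0\nu+\mu_0)(1)=2t_0\xi'(\bar q_0(1))+2\mu_0(1)$ and $2t_0\theta(\bar q_0(1))$ unless $\bar q_0(1)=1$. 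Functionals such as $\mu\mapsto \E\log\la\cosh(w^\mu(\alpha))\ra=\mu(1)-\psi(\mu)$ genuinely depend on this variance (they are not continuous under matching of off-diagonal covariances alone), so your claimed limits are wrong in general: the correct values are $\E\log\la\cosh\ra=t_0\xi'(1)+\mu_0(1)-\psi(t_0\nu+\mu_0)$ and $\E\log\la\exp\ra=-t_0\int_0^1\theta(\bar q_0(s))\,\d s+t_0\theta(1)$. Indeed, if you add your three claimed pieces with $C_N\to t_0\xi(1)+\mu_0(1)$, you get $\psi(t_0\nu+\mu_0)-t_0\int_0^1\xi^*(\nu(s))\,\d s+t_0\bigl(\xi(1)-\xi'(\bar q_0(1))\bigr)$, which differs from the statement by a generically nonzero constant (for the SK case $\xi(r)=\beta^2r^2$ the leftover is $t_0\beta^2(1-2\bar q_0(1))$).

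The paper handles precisely this point by approximating $\zeta=\bar q_0$ within $\mathcal{M}_\mathrm{atom}$ and adding a vanishing mass at the top so that $\zeta(1)=1$, i.e.\ by keeping an extra tree level at overlap value $1$ that encodes the self-overlap in the discrete cascade computation; then the constants cancel exactly through $\xi(1)-\xi'(1)+\theta(1)=0$, and the limit takes the asserted form with $\nu=\xi'(\zeta)$. You would need to incorporate this (or otherwise track the diagonal contributions) for the bookkeeping to close. Two smaller points: with your normalization the prefactor $\sqrt{2t_0}$ together with covariance $2t_0\xi'$ for $y$ double-counts $t_0$; and you should invoke overlap positivity (a consequence of the Ghirlanda--Guerra identities) so that $\bar q_0$ takes values in $[0,1]$, since otherwise $\nu=\xi'\circ\bar q_0$ need not be nonnegative and nondecreasing, i.e.\ need not belong to $\mathcal{M}_\mathrm{b}$.
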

\begin{lemma}\label{l.subsol}
Let $\nu \in \C$ and define
\begin{align}\label{e.g}
    g(t,\mu) = \psi(t\nu + \mu) -t\int_0^1 \xi^*(\nu(s))\d s,\quad\forall (t,\mu)\in [0,\infty)\times\C.
\end{align}
Then, $g$ is a viscosity subsolution of \eqref{e.hj}.
\end{lemma}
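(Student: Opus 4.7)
The plan is to verify the viscosity subsolution condition directly from the definition. Let $\phi$ be a smooth test function and suppose $g - \phi$ attains a local maximum at $(t_0,\mu_0) \in (0,\infty)\times\C$; I need to deduce $(\partial_t\phi - \H(\nabla\phi))(t_0,\mu_0)\leq 0$. The starting observation is that $g$ has a distinguished translation structure along the direction $\nu$: since $\nu \in \C$ and $\C$ is a convex cone, the curve $\delta \mapsto (t_0-\delta, \mu_0 + \delta\nu)$ stays in $(0,\infty)\times \C$ for $0<\delta<t_0$, and the argument $t\nu+\mu$ of $\psi$ inside \eqref{e.g} is constant along it, so
\begin{equation*}
    g(t_0-\delta,\mu_0 + \delta\nu) - g(t_0,\mu_0) = \delta\int_0^1 \xi^*(\nu(s))\,\d s.
\end{equation*}

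Plugging this into the local maximum inequality for $g-\phi$ and Taylor-expanding the smooth $\phi$ to first order in $\delta$, then dividing by $\delta$ and letting $\delta \to 0^+$, yields
\begin{equation*}
    \partial_t\phi(t_0,\mu_0) \leq \la \nabla\phi(t_0,\mu_0),\nu\ra_\cH - \int_0^1 \xi^*(\nu(s))\,\d s.
\end{equation*}
To finish, I need to show the right-hand side is bounded by $\H(\nabla\phi(t_0,\mu_0))$. This reduces to a purely convex-analytic claim about $\H$ that is independent of the test function: for every $p \in \cH$ and $\nu \in \C$,
\begin{equation*}
    \la p,\nu\ra_\cH - \int_0^1 \xi^*(\nu(s))\,\d s \leq \H(p).
\end{equation*}

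To prove the last inequality from the definition of $\H$, I would pick any $\mu \in \C \cap (p+\C^*)$ admissible for the infimum. Since $\mu - p \in \C^*$ and $\nu \in \C$, one gets $\la p,\nu\ra \leq \la\mu,\nu\ra$. Then the pointwise Fenchel--Young bound $\mu(s)\nu(s) \leq \bar\xi(\mu(s)) + \xi^*(\nu(s))$, valid in the regime $\mu(s)\in[0,1]$ where $\bar\xi = \xi$ by \eqref{e.H=H_xi}, integrates to $\la p,\nu\ra - \int \xi^*(\nu) \leq \int \bar\xi(\mu)$, and taking the infimum over $\mu$ concludes. The main obstacle will be to justify this last step when the admissible $\mu$ takes values outside $[0,1]$, since the Lipschitz regularization $\bar\xi$ no longer agrees with $\xi$ there and the naive Young inequality goes in the wrong direction. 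I would handle this either by truncating $\mu$ to a $[0,1]$-valued function that remains in $\C\cap(p+\C^*)$ without increasing $\int\bar\xi(\mu)$, exploiting that $\bar\xi$ is nondecreasing, or by invoking the $\C^*$-nondecreasing property \eqref{e.C^*-nond} of $\H$ together with monotonicity properties of $\psi$ to reduce to the case $p \in \C$ with range in $[0,1]$, where the Fenchel--Young bound is immediate.
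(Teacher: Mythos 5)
The first half of your argument is fine, and the use of the direction $(-\delta,\delta\nu)$, along which $t\nu+\mu$ is constant, is a nice way to get $\partial_t\phi(t_0,\mu_0)\leq\la\nabla\phi(t_0,\mu_0),\nu\ra_\cH-\int_0^1\xi^*(\nu(s))\d s$ without ever differentiating $\psi$ (granting $\int_0^1\xi^*(\nu)<\infty$). The gap is the reduction to the test-function-independent inequality $\la p,\nu\ra_\cH-\int_0^1\xi^*(\nu(s))\d s\leq\H(p)$ for every $p\in\cH$: this inequality is false. Since $\bar\xi$ is Lipschitz with some constant $L$, taking the admissible choice $\mu\equiv q$ shows $\H(p)\leq\bar\xi(q)\leq\bar\xi(0)+Lq$ for the constant function $p\equiv q>0$, whereas for a constant $\nu\equiv c\in\C$ with $c>L$ and $\xi^*(c)<\infty$ the left-hand side equals $qc-\xi^*(c)$, which exceeds $\H(p)$ once $q$ is large. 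Such $\nu$ is allowed in the lemma. Nor can you rescue the claim by restricting to gradients of touching test functions: at a boundary point such as $\mu_0=0$, if $\phi$ touches $g$ from above then so does $\phi+C\la h,\cdot\ra_\cH$ for any $h\in\C^*$ and $C>0$, so touching gradients contain $\nabla\phi+Ch$ with $C$ arbitrarily large; the subsolution inequality survives there only because $\H$ is $\C^*$-nondecreasing \eqref{e.C^*-nond}, a structural fact that the chain $\partial_t\phi\leq\la\nabla\phi,\nu\ra_\cH-\int\xi^*(\nu)\leq\H(\nabla\phi)$ throws away. Your two fallback fixes do not repair this: truncating an admissible $\mu$ at level $1$ in general exits the constraint set $\C\cap(p+\C^*)$ (already for $p\equiv q>1$), and replacing $p$ by a $\C^*$-smaller element lowers \emph{both} sides of the desired inequality, so it does not reduce the problem to $p$ with range in $[0,1]$.

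What actually makes the lemma true is that the \emph{intrinsic} spatial gradient of $g$, namely $\nabla\psi(t\nu+\mu)$, lies in $\C\cap\{0\leq\cdot\leq1\}$, where $\H=\H_\xi$ by \eqref{e.H=H_xi} and the Fenchel--Young inequality $ab\leq\xi(a)+\xi^*(b)$ (for $a\in[0,1]$, $b\geq0$) applies; the possible excess of a touching gradient over this intrinsic gradient is then absorbed by the $\C^*$-monotonicity of $\H$. Since differentiability of $\psi$ on $\C$ is not known, the paper implements this on finite-dimensional projections: it checks that $g_j(t,q)=\psi^j(t\pj_j\nu+q)-t\sum_{k}2^{-j}\xi^*((\pj_j\nu)_k)$ is a classical subsolution on $(0,\infty)\times\itr(\C^j)$, using the known differentiability of $\psi^j$ and the gradient localization $\nabla\psi^j\in\C^j\cap[0,1]^{2^j}$, upgrades this to a viscosity subsolution on all of $\C^j$ via the cone-monotonicity result of \cite{chen2022hamiltonCone}, and concludes by stability of subsolutions under local uniform limits \cite[Proposition~3.9]{chen2022hamilton} together with the convergence $g_j^\uparrow\to g$. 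Your single-direction trick does dispense with differentiating $\psi$ in the time variable, but some substitute for the gradient localization and for the monotonicity of $\H$ is indispensable, and the proposal as written has none.
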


\begin{proof}[Proof of Proposition~\ref{p.a-s-s}]
Combining the two lemmas above, we obtain the desired result with the dense set $[0,\infty)\times \mathcal{M}_\mathrm{atom}$.
\end{proof}

To prove the corollary, we need the comparison principle of viscosity solutions~\cite[Proposition~3.8]{chen2022hamilton}:

\begin{proposition}[Comparison principle \cite{chen2022hamilton}]\label{p.cp}
If $u$ and $v$ are Lipschitz, and they are respectively a viscosity subsolution and a viscosity supersolution of \eqref{e.hj}, then $\sup_{[0,\infty)\times\C}(u-v)= \sup_{\{0\}\times \C}(u-v)$.
\end{proposition}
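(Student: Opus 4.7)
The plan is to establish the comparison principle by a doubling-of-variables argument of Crandall--Lions type, adapted to the infinite-dimensional conical setting. Suppose for contradiction that $M := \sup_{[0,\infty)\times\C}(u-v) - \sup_{\{0\}\times\C}(u-v) > 0$. For small $\varepsilon,\delta > 0$, I would study the penalized functional
\[
\Phi_{\varepsilon,\delta}(t,\mu,s,\nu) = u(t,\mu) - v(s,\nu) - \frac{(t-s)^2 + |\mu-\nu|_\cH^2}{2\varepsilon} - \delta(t+s) - \kappa(\mu,\nu)
\]
on $([0,\infty)\times\C)^2$, where $\kappa$ is a small smooth coercive penalty chosen so that approximate maximizers lie in a bounded region of $\cH\times\cH$. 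Lipschitz continuity of $u,v$ together with a suitable choice of parameters ensures that $\sup\Phi_{\varepsilon,\delta}$ stays comparable to $M$, while the standard quadratic-penalty estimates force any near-maximizer to satisfy $(t-s)^2 + |\mu-\nu|_\cH^2 = O(\varepsilon)$ and $t,s>0$, so that the viscosity inequalities become usable.

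Because $\cH$ is infinite-dimensional, $\Phi_{\varepsilon,\delta}$ need not attain its supremum. I would therefore invoke the Borwein--Preiss smooth variational principle to produce, for each $\eta > 0$, a point $(t_*,\mu_*,s_*,\nu_*)$ and additional smooth quadratic perturbations with total $\R\times\cH$-gradient norm at most $\eta$ so that the perturbed functional attains a strict maximum there. Freezing $(s_*,\nu_*)$ gives a smooth test function $\phi_u$ for $u$, and freezing $(t_*,\mu_*)$ gives one for $v$. The subsolution and supersolution definitions then yield
\[
\partial_t\phi_u(t_*,\mu_*) - \H(\nabla\phi_u(t_*,\mu_*)) \leq 0,\qquad \partial_s\phi_v(s_*,\nu_*) - \H(\nabla\phi_v(s_*,\nu_*)) \geq 0.
\]
Subtraction produces $\partial_t\phi_u - \partial_s\phi_v = 2\delta + O(\eta)$, while the spatial-gradient difference at the two test points amounts to $O(\delta + \eta)$ in $\cH$. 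Lipschitz continuity of $\H$ then yields an inequality of the form $2\delta \leq L\cdot O(\delta+\eta)$ that becomes impossible once $\eta$ is sent to $0$ and $\delta$ is taken small enough relative to the Lipschitz data, producing the desired contradiction.

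The main obstacle is the combination of two infinite-dimensional pathologies: $\cH$ is not locally compact, so suprema need not be attained; and the cone $\C$ has empty interior in $\cH$, so the admissible directions for perturbing test points inside $\C$ are severely restricted. The Borwein--Preiss principle addresses the first by recovering approximate maximizers at differentiability points with a controlled gradient error. The $\C^*$-nondecreasing property \eqref{e.C^*-nond} of the modified Hamiltonian $\H$ is what makes the second tractable: it guarantees that only the component of the test gradient in $\C^*$-increasing directions enters the inequality, so the penalty gradients $\tfrac{\mu_*-\nu_*}{\varepsilon}$ and their admissible projections can be handled without leaving $\C$. Packaging these two ingredients carefully is precisely what is carried out in \cite[Section~3]{chen2022hamilton}.
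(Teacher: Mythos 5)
The paper does not prove this proposition: it is imported verbatim from \cite[Proposition~3.8]{chen2022hamilton}, so there is no internal proof to compare against. Your strategy --- doubling of variables with quadratic penalization, a smooth variational principle (Borwein--Preiss/Stegall) to compensate for the lack of local compactness of $\cH$, and the $\C^*$-monotonicity of $\H$ to neutralize the fact that $\C$ has empty interior --- is the standard Crandall--Lions route in Hilbert space and is the same general approach as the cited reference. Two remarks on the substance of your sketch, one of which is a genuine gap.

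The gap is in the final quantitative step. You choose the coercive penalty $\kappa$ so that its gradient is $O(\delta)$ and then derive $2\delta \le L\cdot O(\delta+\eta)$, claiming a contradiction ``once $\eta\to 0$ and $\delta$ is taken small enough.'' If the error on the right is genuinely proportional to $\delta$, shrinking $\delta$ does nothing: both sides scale linearly and the inequality may simply hold for all $\delta$. The coercivity parameter must be decoupled from $\delta$: at \emph{fixed} $\delta>0$ one sends $\eta\to 0$ and then the coercivity parameter $\theta\to 0$, obtaining $2\delta\le o(1)$, which is the contradiction. This forces you to confront a second issue you gloss over: a penalty whose gradient is uniformly small cannot confine near-maximizers of a function that is merely Lipschitz (hence of linear growth) on the unbounded cone $\C$ and unbounded time axis. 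One must first dispose of the trivial case $\sup_{\{0\}\times\C}(u-v)=+\infty$, then use Lipschitzness in $t$ to get an a priori upper bound for $u-v$ on each strip $[0,T]\times\C$, prove the comparison on $[0,T]$, and let $T\to\infty$; only then is $\sup\Phi_{\varepsilon,\delta}$ finite and comparable to $M$, and only then do the penalty estimates localize the near-maximizers. Without this reduction the functional $\Phi_{\varepsilon,\delta}$ may have supremum $+\infty$ and the argument does not start.

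A smaller point: your explanation of the role of \eqref{e.C^*-nond} (``only the component of the test gradient in $\C^*$-increasing directions enters the inequality'') is imprecise. With Definition~\ref{d.vs} as stated, local extrema are taken \emph{relative} to $(0,\infty)\times\C$, so the variational-principle maximizers automatically lie in $\C$ and are admissible test points; the monotonicity of $\H$ is the tool that makes the relative boundary of the cone harmless (as the paper itself notes in Step~2 of the proof of Lemma~\ref{l.subsol}, citing \cite{chen2022hamiltonCone}), but how it enters the doubling argument needs to be spelled out rather than asserted.
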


\begin{proof}[Proof of Corollary~\ref{c}]
Proposition~\ref{p.cp} implies that
\begin{align*}f(t,\mu) = 
    \sup\left\{g(t,\mu):g\in\mathcal{S} \right\},\quad\forall (t,\mu)\in[0,\infty)\times\C,
\end{align*}
where $\mathcal{S} =\left\{g:\text{ $g$ is a Lipschitz viscosity subsolution of \eqref{e.hj} satisfying $g(0,\cdot)\leq \psi$} \right\}$.
On the other hand, by \eqref{e.F_N-lip} and H\"older's inequality, we have that $\bar F_N$ is Lipschitz in $\mu$ uniformly in $t$ and $N$. By computing the derivative, one can easily see that $\bar F_N$ is also Lipschitz in $t$ uniformly in $\mu$ and $N$.
Hence, to show $\limsup_{N\to\infty}\bar F_N\leq f$, it suffices to show that
\begin{align}\label{e.recipe}
    \limsup_{N\to\infty}\bar F_N(t_0,\mu_0) \leq \sup\{g(t_0,\mu_0):g\in\mathcal{S}\}
\end{align}
for $(t_0,\mu_0)$ in a dense subset of $[0,\infty)\times\C$. Let $A_N$ be given in \eqref{e.A_N}, and we have that $\limsup_{N\to\infty} \bar F_N(t_0,\mu_0)\leq \limsup_{N\to\infty} A_N(t_0,\mu_0)$. Taking a subsequence realizing $\limsup_{N\to\infty} A_N(t_0,\mu_0)$ and extracting a further subsequence given in Proposition~\ref{p.a-s-s}, we deduce that $\limsup_{N\to\infty} A_N(t_0,\mu_0)= g_0(t_0,\mu_0)$ for some $g_0\in \mathcal{S}$ that is clearly Lipschitz and satisfies $g_0(0,\cdot)=\psi$. This implies \eqref{e.recipe} and completes the proof.
\end{proof}

In the remainder of this section, we prove Lemmas~\ref{l.ASS} and \ref{l.subsol}.

\subsection{Proof of Lemma~\ref{l.ASS}}\label{s.pf_ASS}

This lemma follows from a combination of known results. The first step is to compute the term $A_N$ in the Aizenman--Sims--Starr scheme, following the argument in \cite[Section~4.2]{mourrat2020extending}. 
The important tool is the synchronization mechanism developed in \cite{pan.multi,pan.potts,pan.vec} based on the overlap ultrametricity proved in \cite{pan.aom}. 
The second step is to identify the formula for the limit of $A_N$ with the desired expression, following the proof of \cite[Proposition 4.1]{mourrat2022parisi}.
For the first step, since the techniques and computations involved are standard, we give a sketch of the argument and refer to existing works for detail.

\begin{proof}[Proof of Lemma~\ref{l.ASS}]
We follow the announced two steps above.

\textbf{Step~1.}
As usual, we can slightly perturb the Gibbs measure associated with $\bar F_N(t_0,\mu_0)$ to ensure the synchronization between the spin overlap and the overlap of the Ruelle probability cascade. Define
\begin{align*}
    \mathscr{A}_N = -\big((N+1)\bar F_{N+1}(t_0,\mu_0)- N\bar  F_N(t_0,\mu_0)\big) + t_0\xi(1)+\mu_0(1),
\end{align*}
and we have
\begin{align}\label{e.F_N<A_N}
    A_N(t_0,\mu_0) =  -\mathscr{A}_N + t_0\xi(1)+\mu_0(1).
\end{align}

Let $\xi'$ be the derivative of $\xi$ and define $\theta(r) = r\xi'(r)-\xi(r)$ for all $r\in\R$. Following the computation in the proof of \cite[Theorem~3.6]{pan}, we can get
\begin{align}\label{e.sA_N}
    \mathscr{A}_N= \E\log \la \cosh\left(z_N(\sigma,\alpha)\right)\ra - \E\log \la \exp y_N(\sigma)\ra +o(1)
\end{align}
where $\la \cdot\ra$ is the Gibbs measure associated with Hamiltonian $H'_N(\sigma) + \sum_{i=1}^N\sigma_i w_i^\mu(\alpha)$ plus a perturbation. Here, $(H'_N(\sigma))_{\sigma\in\Sigma_N}$ is a centered Gaussian process satisfying
\begin{align*}
    \E H'_N(\sigma^1)H'_N(\sigma^2) = 2t_0(N+1)\xi\left(\frac{\sigma^1\cdot\sigma^2}{N+1}\right)\end{align*}
and $(z_N(\sigma,\alpha))_{\sigma,\alpha}$ and $(y_N(\sigma))_\sigma$ are two centered Gaussian processes, independent from each other and $H'_N(\sigma)$, with covariances
\begin{gather*}
    \E z_N(\sigma^1,\alpha^1)z_N(\sigma^2,\alpha^2) = 2t_0\xi'\left(\frac{\sigma^1\cdot\sigma^2}{N}\right) + 2\mu_0(\alpha^1\cdot\alpha^2),
    \\
    \E y_N(\sigma^1)y_N(\sigma^2) = 2t_0 \theta\left(\frac{\sigma^1\cdot\sigma^2}{N}\right).
\end{gather*}
We define
\begin{align*}
    R^N = \left(R^{\mathrm{s},N}_{l,l'},\  R^{\mathrm{r},N}_{l,l'}\right)_{l,l'\in\N} =\left(\frac{\sigma^l\cdot\sigma^{l'}}{N},\  \mu_0 (\alpha^l\cdot\alpha^{l'})\right)_{l,l'\in\N}.
\end{align*}
Given any subsequence, we can extract a further subsequence $(N_k)_{k\in\N}$ along which the law of $R^N$ converges to some random array $R=(R^\mathrm{s},R^\mathrm{r})$ (in terms of any finite-dimensional projection). By the invariance property of the Ruelle probability cascade (\cite[Theorem~4.4]{pan}), we still have $R^\mathrm{r}_{1,2} \stackrel{\mathrm{d}}{=}\mu_0(U)$. 
Suppose that $R^\mathrm{s}_{1,2}\stackrel{\mathrm{d}}{=}\zeta(U)$ for some $\zeta\in\mathcal{M}_\mathrm{b}$. Then, by the synchronization mechanism (\cite[Theorem~4]{pan.multi} together with \cite[Proposition~5.2]{mourrat2020free}), we can deduce that $(R^\mathrm{s}_{1,2},R^\mathrm{r}_{1,2})\stackrel{\mathrm{d}}{=}(\zeta(U),\mu_0(U))$. 

Due to $\frac{\sigma^1\cdot\sigma^{2}}{N}\in[0,1]$, we have that $\zeta([0,1))\subset [0,1]$.
Since discrete Ruelle probability cascades are easier to compute, we can approximate $\zeta$ by elements in $\mathcal{M}_\mathrm{atom}$, and then pass to the limit. Hence, let us assume $\zeta\in \mathcal{M}_\mathrm{atom}$. Moreover, we can always add a small mass near $1$ so that $\zeta(1) =1$ (recall that $\zeta(1) = \lim_{r\to1}\zeta(r)$ as a definition), which is needed for a technical reason in Step~2.

We redefine $\la \cdot\ra$ to be a random measure on a Hilbert space such that $R\stackrel{\mathrm{d}}{=}(\tau^l\cdot\tau^{l'})_{l,l'\in\N}$ (along any finite-dimensional projection) under $\E\la\cdot\ra^{\otimes\infty}$ for $(\tau^l)_{l\in\N}$ independently sampled from $\la \cdot\ra$. Setting $\tau=\tau^1$, we obtain from \eqref{e.sA_N} that
\begin{align}\label{e.limA_N}
    \lim_{k\to\infty} \mathscr{A}_{N_k} = \E\log \la \cosh\left(z(\tau)\right)\ra - \E\log \la \exp y(\tau)\ra
\end{align}
where the centered Gaussian processes satisfy
\begin{gather*}
    \E z(\tau^1)z(\tau^2) \stackrel{\mathrm{d}}{=} 2t_0\xi'\left(\zeta(U)\right) + 2\mu_0(U),\qquad\E y_N(\tau^1)y_N(\tau^2) \stackrel{\mathrm{d}}{=} 2t_0 \theta\left(\zeta(U)\right).
\end{gather*}
The expectations on the left-hand sides are conditioned on $(\tau^1,\tau^2)$.

\textbf{Step~2.}
We identify terms on the right-hand side of \eqref{e.limA_N}.
Using~\eqref{e.psi} with $t_0\xi'(\zeta)+\mu_0$ substituted for $\mu$, we have that
\begin{align*}
    \E\log \la \cosh\left(z(\tau)\right)\ra = -\psi(t_0\xi'(\zeta)+\mu_0)+(t_0\xi'(\zeta)+\mu_0)(1).
\end{align*}
The second term on the right of \eqref{e.limA_N} has the same form as that of the second term in the Parisi formula expressed in \cite[Lemma~3.1]{pan} which is equal to the second term on the right of \cite[(3.11)]{pan}. Since we have assumed $\zeta\in\mathcal{M}_\mathrm{atom}$ and $\zeta(1)=1$, we can write $\zeta=\sum_{l=1}^nq_l\mathds{1}_{[\zeta_{l-1},\zeta_l)}$ for $0=\zeta_0<\zeta_1<\dots<\zeta_{n}=1$ and $0\leq q_0\leq q_1\leq\cdots q_n=1$. Using the computation in \cite[Lemma~3.1]{pan} and summation by parts, we can get
\begin{align*}
    \E\log \la \exp y(\tau)\ra & = \frac{1}{2}\sum_{0\leq l\leq n-1}\zeta_l\left(\tilde\theta(q_{l+1})-\tilde\theta(q_l)\right) = \frac{1}{2}\left(-\sum_{l=1}^n (\zeta_{l}-\zeta_{l-1})\tilde\theta(q_l) + \zeta_n\tilde\theta(q_n)-\zeta_0\tilde\theta(q_0)\right)
    \\
    & =\frac{1}{2}\left( -\int_0^1\tilde\theta(\zeta(s))\d s + \tilde\theta(1)\right)
\end{align*}
where $\tilde \theta =2t_0\theta$.
Inserting these into \eqref{e.limA_N}, we arrive at
\begin{align*}
    \lim_{k\to\infty} \mathscr{A}_N = -\psi(t_0\xi'(\zeta)+\mu_0)+(t_0\xi'(\zeta)+\mu_0)(1) -t_0\theta(\zeta(1))+ t_0\int_0^1\theta(\zeta(s))\d s .
\end{align*}
Inserting this to \eqref{e.F_N<A_N}, we get 
\begin{align*}
    \lim_{k\to\infty}A_{N_k}(t_0,\mu_0) = \psi(t_0\xi'(\zeta)+\mu_0) - t_0\int_0^1\theta(\zeta(s))\d s +t_0\Big(\xi(1)-\xi'(\zeta(1))+\theta(\zeta(1))\Big).
\end{align*}
Due to the definition of $\theta$ and $\zeta(1)=1$, we have $\xi(1)-\xi'(\zeta(1))+\theta(\zeta(1))=0$. Recall the definition of $\xi^*$ in \eqref{e.xi^*}. Using the convexity of $\xi$ on $[0,\infty)$ and $\xi'(r)\geq 0$ for $r\geq 0$, we have that
\begin{align*}
    \theta(r) = r\xi'(r)-\xi(r) =\sup_{q\geq 0}\{q\xi'(r)-\xi(q)\}=\xi^*(\xi'(r)),\quad\forall r\geq 0.
\end{align*}
Therefore,
\begin{align*}
    \lim_{k\to\infty}A_{N_k}(t_0,\mu_0) = \psi(t_0\xi'(\zeta)+\mu_0) - t_0\int_0^1\xi^*(\xi'(\zeta(s)))\d s.
\end{align*}
Setting $\nu=\xi'(\zeta)$, we obtain the desired result.
\end{proof}

\subsection{Proof of Lemma~\ref{l.subsol}}\label{s.pf_subsol}
We first explain the heuristics, then introduce the necessary notation, and lastly prove the lemma.
\subsubsection{Heuristics}
In notation \eqref{e.def_H_phi}, we can rewrite \eqref{e.g} as $g(t,\mu) = \psi(t\nu+\mu) - t\H_{\xi^*}(\nu)$. Since $\H$ is a modification of $\H_\xi$, we can believe that its convex conjugate $\H^*$ is effectively equal to $\H_{\xi^*}$. Then, we can formally compute
\begin{align*}
    \left(\partial_t g - \H(\nabla g)\right)(t,\mu) = \la\nabla\psi(t\nu+\mu),\nu\ra_\cH - \H^*(\nu) - \H(\nabla\psi(t\nu+\mu))\leq 0
\end{align*}
by the property of convex conjugates. This formally verifies that $g$ is a subsolution. There are two issues with this computation: we do not know if $\psi$ is differentiable and we need to justify that $\H^*$ is effectively $\H_{\xi^*}$. We use finite-dimensional approximations to make it rigorous.

\subsubsection{Notation for approximations}
For each $j\in\N$, we associate a dyadic partition $([\frac{k-1}{2^j},\frac{k}{2^j}))_{1\leq k\leq 2^j}$ of $[0,1)$. Recall that $\cH = L^2([0,1))$. For each $j$, we define $\cH^j$ to be $\R^{2^j}$ equipped with inner product $\la a,b\ra_{\cH^j} = \sum_{k=1}^{2^j}\frac{1}{2^j} a_kb_k$. Then, we define projection $\pj_j:\cH\to\cH^j$ and lift $\lf_j :\cH^j\to\cH$ by
\begin{gather*}
    \pj_j \iota = \left(2^j\int_{\frac{k-1}{2^j}}^\frac{k}{2^j}\iota(s)\d s\right)_{1\leq k \leq 2^j},\quad\forall \iota \in\cH; \qquad\qquad
    \lf_j a = \sum_{k=1}^{2^j}a_k \mathds{1}_{[\frac{k-1}{2^j}, \frac{k}{2^j})},\quad\forall a \in \cH^j.
\end{gather*}
For $\iota\in\cH$, we define $\iota^\j = \lf_j\pj_j\iota$ as a locally-averaged approximation of $\iota$. Define $\C^j = \pj_j(\C)$.

For a function $g$ defined on a subset of $\R\times \cH$ or $\cH$, we define its $j$-projection $g^j$ by $g^j(t,a) = g(t,\lf_j a)$ or $g^j(a) = g(\lf_ja)$. For a function $h$ defined on a subset of $\R\times \cH^j$ or $\cH^j$, we define its lift $h^\uparrow$ by $h^\uparrow(t,\mu) = h(t,\pj_j\mu)$ or $h^\uparrow(\mu) = h(\pj_j\mu)$.

We denote equation \eqref{e.hj} by $\HJ(\cH,\C,\H)$ and its Cauchy problem with initial condition $\psi$ by $\HJ(\cH,\C,\H;\psi)$. On $\C^j\subset \cH^j$, we can define differentiability and viscosity solutions in the same way as in Section~\ref{s.defs}. We denote by $\HJ(\cH^j,\C^j,\H^j)$ the equation with $\cH,\C,\H$ in \eqref{e.hj} replaced by $\cH^j,\C^j,\H^j$, and make sense of its solutions in the viscosity sense as in Definition~\ref{d.vs}. This equation is viewed as a finite-dimensional approximation of equation~\eqref{e.hj}.

We see that $\H^j$, the $j$-projection of $\H$, is Lipschitz, bounded below, convex, and $(\C^j)^*$-nondecreasing in a sense analogous to \eqref{e.C^*-nond}, where
\begin{align*}(\C^j)^* = \{a\in \cH^j:\la a,b\ra_{\cH^j}\geq 0,\, \forall b\in\C^j\}
\end{align*}
is the dual cone of $\C^j$ in $\cH^j$.

The set of dyadic partitions is well-ordered by the set inclusion, and it is a \textit{directed set} in the sense that, for any two partitions, there is a partition that refines both. In \cite[Section~3]{chen2022hamilton}, results were stated for objects indexed by a general directed set of partitions, which are applicable here.

We use superscript $j$ to indicate that an object is obtained through a projection-related operation and lower script $j$ for a generic index.

\begin{proof}[Proof of Lemma~\ref{l.subsol}]
For each $j$, define
\begin{align}\label{e.g_j}
    g_j(t,q) = \psi^j(t\pj_j \nu + q) - t \sum_{k=1}^{2^j}\frac{1}{2^j}\xi^*\left((\pj_j\nu)_k\right),\quad\forall (t,q)\in [0,\infty)\times \C^j.
\end{align}
It was shown in \cite[Proposition~3.9]{chen2022hamilton} that the local uniform limit of $\HJ(\cH^j,\C^j,\H^j)$ is a subsolution of \eqref{e.hj}. Hence,
it suffices to show that $g_j$ is a viscosity subsolution of $\HJ(\cH^j,\C^j,\H^j)$ and $g^\uparrow_j$ converges to $g$ locally uniformly along a subsequence. We proceed in steps.

\textbf{Step~1.}
We verify the local uniform convergence of $g^\uparrow_j$. Assuming
\begin{align}\label{e.mu-mu^j}
    \left|\mu - \mu^\j\right|_{L^1}\leq 2^{\frac{3-j}{2}}|\mu|_\cH,\quad\forall \mu \in \C,
\end{align}
and using \eqref{e.F_N-lip} for $t=0$, we have
\begin{align*}
    \left|\big(\psi^j(t\pj_j\nu+\cdot)\big)^\uparrow(\mu) - \psi(t\nu+\mu)\right| = \left|\psi\left(t\nu^\j +\mu^\j \right) -\psi (t\nu+\mu)\right|
    \\
    \leq \left|t\nu^\j +\mu^\j - t\nu-\mu\right|_{L^1}\leq 2^\frac{3-j}{2}|t\nu +\mu|_\cH.
\end{align*}
This implies that $(\psi^j(t\pj_j\nu+\cdot))^\uparrow $ converges locally uniformly to $\psi(t\nu+\cdot)$.
We postpone the proof of \eqref{e.mu-mu^j}.

Viewing $\nu^\j$ as the expectation (with respect to the Lebesgue measure on $[0,1)$) of $\nu$ conditioned on the sigma-algebra generated by the dyadic partition associated with $j$, we see that $(\nu^\j)_{j\in\N}$ is a martingale. By the martingale convergence theorem, we have $\lim_{j\to\infty} \nu^\j =\nu$ in $\cH$ (see \cite[Lemma~3.3~(7)]{chen2022hamilton} for detail).
Hence, we can extract a subsequence $(j_n)$ along which $\nu^{(j_n)}$ converges to $\nu$ a.e. Since $\xi^*$ (defined in \eqref{e.xi^*}) is bounded below ($\xi^*\geq  -\xi(0)$) and is lower-semicontinuous, by Fatou's lemma, we have
\begin{align*}
    \int_0^1\xi^*(\nu(s))\d s \leq \liminf_{n\to\infty}\int_0^1\xi^*\left(\nu^{(j_n)}(s)\right)\d s.
\end{align*}
On the other hand, by Jensen's inequality and the convexity of $\xi^*$,
\begin{align*}
    \int_0^1\xi^*\left(\nu^\j(s)\right)\d s \leq \int_0^1\xi^*(\nu(s))\d s,\quad\forall j\in\N.
\end{align*}
Hence, we must have
\begin{align*}
    \lim_{n\to\infty}\int_0^1\xi^*\left(\nu^{(j_n)}(s)\right)\d s = \int_0^1\xi^*(\nu(s))\d s .
\end{align*}
Recognizing that $\int_0^1\xi^*\left(\nu^{(j_n)}(s)\right)\d s $ is the sum in the definition of $g_{j_n}$, we conclude together with the previous paragraph that $g_{j_n}^\uparrow$ converges to $g$ (defined in \eqref{e.g}) locally uniformly.

\textbf{Step~2.}
We show that $g_j$ is a viscosity subsolution of $\HJ(\cH^j,\C^j,\H^j)$. First, we show that $g_j$ satisfies 
\begin{align}\label{e.g_j_eqn}
    \partial_t g_j - \H^j_\xi(\nabla g_j)\leq 0
\end{align}
in the classical sense, where $\H^j_\xi$ is the $j$-projection of $\H_\xi$ and is equal to $\sum_{k=1}^{2^j}\frac{1}{2^j}\xi(a_k)$ for $a\in \cH^j$.

As preparation, we show
\begin{align}\label{e.grad_psi_in}
    \nabla\psi^j(q)\in\C^j\cap [0,1]^{2^j},\quad\forall q \in \C^j.
\end{align}
Recall that $\psi=\bar F_1(0,\cdot)$ and the differentiability of $\bar F_1(0,\lf_j\cdot)$ restricted to discrete paths in $\C^j$ is known. Its derivatives were computed, for instance, in \cite[(2.17)]{mourrat2022parisi}.
By \eqref{e.F_N-lip} for $t=0$, we have
\begin{align*}
    |\psi^j(q) - \psi^j(q')|=|\psi(\lf_jq)-\psi(\lf_jq')|\leq |\lf_jq - \lf_jq'|_{L^1} = \sum_{k=1}^{2^j}2^{-j} |q_k-q'_k| = |q=q'|_{l^1},
\end{align*}
which implies that $|\nabla\psi^j(q)|_{l^\infty}\leq 1$ and thus $(\nabla \psi^j(q))_k\in [-1,1]$ for all $q$ and $k$. Using \cite[Proposition~3.6]{mourrat2020free} (with $\frac{k}{2^j}$ substituted for $\zeta_k$ therein), we have, for all $(t,q)\in [0,\infty)\times \C^j$ and $N\in\N$,
\begin{align*}
    0\leq\left(\nabla (\bar F_N)^j(t,q)\right)_k \leq \left(\nabla (\bar F_N)^j(t,q)\right)_{k+1},\quad\forall k.
\end{align*}
Applying this with $t= 0$, together with the previous result, we obtain \eqref{e.grad_psi_in}.

Now back to showing \eqref{e.g_j_eqn}, fixing any $t,q$, setting $p = \pj_j\nu$, and writing $\nabla\psi^j = \nabla \psi^j(tp+q)$, we obtain from \eqref{e.g_j} that
\begin{align}
    \left(\partial_t g_j - \H^j_\xi(\nabla g_j)\right)(t,q) = \la \nabla \psi^j,p\ra_{\cH^j} -\sum_{k=1}^{2^j}\frac{1}{2^j}\xi^*(p_k) - \sum_{k=1}^{2^j}\frac{1}{2^j}\xi((\nabla\psi^j)_k)\notag\\
    = \sum_{k=1}^{2^j}\frac{1}{2^j}\Big((\nabla\psi^j)_kp_k - \xi^*(p_k)-\xi\left((\nabla\psi^j)_k\right)\Big)\leq 0,\label{e.g_satisfy<}
\end{align}
where the last inequality follows from the definition of $\xi^*$ in \eqref{e.xi^*} and $(\nabla\psi^j)_k\geq 0$ due to \eqref{e.grad_psi_in}.
In particular, $g$ satisfies \eqref{e.g_j_eqn} in the classical sense on $(0,\infty)\times \itr(\C^j)$ where $\itr(\C^j)$ is the interior of $\C^j$. 

Since \eqref{e.H=H_xi} implies that
\begin{align*}\H^j = \H^j_\xi \quad\text{on $\C^j\times[0,1]^{2^j}$},
\end{align*}
by \eqref{e.grad_psi_in}, we have $\H^j(\nabla \psi^j) = \H^j_\xi(\nabla\psi^j)$. Due to this and \eqref{e.g_satisfy<}, $g$ solves $\partial_t g_j -\H^j(\nabla g_j)\leq 0$ in the classical sense on $(0,\infty)\times \itr(\C^j)$. 

Since $\itr(\C^j)$ is open, $g$ is a viscosity subsolution of $\HJ(\cH^j,\itr(\C^j),\H^j)$ (defined as in Definition~\ref{d.vs} with $\cH,\C,\H$ replaced by $\cH^j,\itr(\C^j),\H^j$).
Since $\H^j$ is $(\C^j)^*$-nondecreasing, \cite[Proposition~2.1]{chen2022hamiltonCone} implies that $g$ is a viscosity subsolution of $\HJ(\cH^j,\C^j,\H^j)$ (monotonicity of $\cH^j$ makes the boundary irrelevant).

\textbf{Step~3.}
It remains to prove \eqref{e.mu-mu^j}. Setting $\Delta = \frac{1}{2^j}$ and using that $\mu$ is nondecreasing, we can compute that
\begin{align*}
    \left|\mu-\mu^\j\right|_{L^1} = \sum_{k=1}^{\Delta^{-1}}\int_{(k-1)\Delta}^{k\Delta}\left|\mu(s) - \Delta^{-1}\int_{(k-1)\Delta}^{k\Delta}\mu(r) \d r\right|\d s \\
    = 2\Delta^{-1} \sum_{k=1}^{\Delta^{-1}}\int_{(k-1)\Delta}^{k\Delta}\int_{(k-1)\Delta}^{s}(\mu(s)-\mu(r))\d r\d s.
\end{align*}
For $B>0$ to be chosen, using again the monotonicity of $\mu$, we have
\begin{align*}
    2\Delta^{-1} \sum_{k=1}^{\Delta^{-1}}\int_{(k-1)\Delta}^{k\Delta}\int_{(k-1)\Delta}^{s}(\mu(s)-\mu(r))\mathds{1}_{\mu(s)>B}\d r\d s\leq 2\sum_{k=1}^{\Delta^{-1}}\int_{(k-1)\Delta}^{k\Delta}\mu(s)\mathds{1}_{\mu(s)>B} \d s\\
    = 2\int_0^1\mu(s)\mathds{1}_{\mu(s)>B} \d s\leq \frac{2}{B}|\mu|^2_\cH.
\end{align*}
For the other half, changing variables and interchanging summations, we have
\begin{align*}
    &2\Delta^{-1} \sum_{k=1}^{\Delta^{-1}}\int_{(k-1)\Delta}^{k\Delta}\int_{(k-1)\Delta}^{s}(\mu(s)-\mu(r))\mathds{1}_{\mu(s)\leq B}\d r\d s
    \\
    &= 2\Delta^{-1} \int_{0}^{\Delta}\int_{0}^{s}\d r\d s\sum_{k=1}^{\Delta^{-1}}(\mu((k-1)\Delta+s)-\mu((k-1)\Delta+r))\mathds{1}_{\mu((k-1)\Delta+s)\leq B}
\end{align*}
Setting $\kappa(s) = \max\{k:\mu((k-1)\Delta+s)\leq B\}$. Then, the summation above is equal to
\begin{align*}
    \sum_{k=1}^{\kappa(s)}(\mu((k-1)\Delta+s)-\mu((k-1)\Delta+r))\leq \mu((\kappa(s)-1)\Delta+s) \leq B
\end{align*}
where in the first inequality, we used the fact that $-\mu((k-1)\Delta+r) + \mu((k-2)\Delta + s)\leq 0$ due to $s-r\leq \Delta$, and that $\mu\geq 0$.
Hence, we obtain
\begin{align*}
    2\Delta^{-1} \sum_{k=1}^{\Delta^{-1}}\int_{(k-1)\Delta}^{k\Delta}\int_{(k-1)\Delta}^{s}(\mu(s)-\mu(r))\mathds{1}_{\mu(s)\leq B}\d r\d s \leq 2\Delta^{-1} \int_{0}^{\Delta}\int_{0}^{s}B\d r\d s = \Delta B.
\end{align*}
In conclusion, we have $|\mu-\mu^\j|_{L^1} \leq \frac{2}{B}|\mu|^2_\cH + \Delta B$. Optimizing over $B$, we get~\eqref{e.mu-mu^j}.
\end{proof}

\small
\bibliographystyle{abbrv}
\newcommand{\noop}[1]{} \def\cprime{$'$}

\end{document}